\documentclass[a4paper]{article}
\usepackage[dvipsnames]{xcolor}
\usepackage{amsmath}
\usepackage{amssymb}
\usepackage{amsthm}
\usepackage{mathrsfs}
\usepackage{enumerate}
\usepackage{color}
\usepackage{geometry}
\usepackage{bbm}
\usepackage{slashed}
\usepackage{hyperref} 
\usepackage{enumerate,cancel,amssymb,centernot}
 \usepackage{latexsym}
 \usepackage{verbatim}
\usepackage{graphicx}
\usepackage{tikz}
\RequirePackage[numbers]{natbib}
\DeclareSymbolFont{sfoperators}{OT1}{ptm}{m}{n}
\DeclareSymbolFontAlphabet{\mathsf}{sfoperators}

\usepackage{float} % 在导言区添加
\usepackage{amsthm}
\usepackage{graphicx} % Required for inserting images
\usepackage{mathtools, stmaryrd}
\usepackage{ulem}
\usepackage[english]{babel}
\usepackage{todonotes}
\usepackage[T1]{fontenc}
\usepackage{enumitem}

\numberwithin{equation}{section}

\newcommand{\keywords}[1]{\par\noindent\textbf{Keywords:} #1}
\newcommand{\msc}[1]{\par\noindent\textbf{MSC Classification:} #1}

\newtheorem{thm}{Theorem}[section]

\newtheorem{lem}[thm]{Lemma}
\newtheorem{pro}[thm]{Proposition}

\newtheorem{rem}[thm]{Remark}

\newtheorem*{thmA*}{Theorem A}
\newtheorem*{thmB*}{Theorem B}

\def\p{{\mathbb P}}
\def\e{{\mathbb E}}

\def\h{{\mathsf H}}
\def\z{{\mathbb Z}}
\def\r{{\mathbb R}}
\def\N{{\mathbb N}}

\makeatletter
\def\operator@font{\mathgroup\symsfoperators}
\makeatother

\title{Exact Limsup Growth of Rarely Visited Sites for One-Dimensional Simple Random Walk}
\author{Chenxu Feng  \\ Peking University \and Chenxu Hao\thanks{Corresponding author. E-mail: \texttt{haochenxu@pku.edu.cn}}  \\ Peking University}

\begin{document}

\maketitle

\begin{abstract}
We investigate the minimal local time $f(n)$ of a one-dimensional simple random walk up to time $n$, defined as the smallest number of visits to any site in the range. A conjecture formulated repeatedly by Erd\H{o}s and R\'{e}v\'{e}sz (1987, 1991) stated that $\limsup_{n\to\infty}f(n)=2$ almost surely, which was disproved by T\'{o}th (1996) who showed $\limsup_{n\to\infty}f(n)=\infty$. Subsequently, R\'{e}v\'{e}sz (2013) suggested studying the growth rate and established an upper bound of the order $\log n$.

In this paper, we determine the precise asymptotic growth rate, proving that with probability one,
$$
\limsup_{n\to\infty}\frac{f(n)}{\log\log n}=\frac{1}{\log 2}.
$$
This result answers the open question posed in Section 13.2 of R\'{e}v\'{e}sz (2013).
\end{abstract}

\keywords{Simple random walk, local time, rarely visited site.}

\msc{60F15, 60J55}

\section{Introduction}
For a discrete-time simple random walk (SRW) $(S_n)_{n\geq 0}$ on $\mathbb{Z}$ with initial state $S_0=0$, we define its local time and range as follows. For any $n\in\N$ and $x\in\z$, let 
\begin{eqnarray*}
\xi(x,n)=\#\{0\le j\le n:S_j=x\},\qquad \mathcal{R}(n)=\{x\in\mathbb{Z}:\exists i\in[0,n],S_i=x\},
\end{eqnarray*}
where $\xi(x,n)$ represents the local time (number of visits) at site $x$ up to time $n$, and $\mathcal{R}(n)$ denotes the range (set of visited sites) of the random walk by time $n$.
The minimal local time $f(n)$ is defined as:
\begin{eqnarray*}
f(n)=\max\{r:\xi(x,n)\ge r\mbox{~for~any~}x\in\mathcal{R}(n)\}
=\min\{\xi(x,n):x\in\mathcal{R}(n)\}
\end{eqnarray*}
which gives the smallest number of visits among all sites in the range at time $n$.

In 1980's, Erd\H{o}s and R\'{e}v\'{e}sz \cite{ER87,ER91} conjectured 
\begin{eqnarray*}
\limsup_{n\to\infty} f(n)=2~\quad\quad a.s.
\end{eqnarray*}
i.e., they conjectured that for any integer $r\ge3$, the probability that there are infinitely many $n$ such that each of the sites of $\mathcal{R}(n)$ has been visited at least $r$ times up to step $n$ is $0$.

T\'{o}th \cite{To96} disproved this conjecture. Actually, T\'{o}th showed for any positive integer $r$, this probability equals to $1$, i.e.,
\begin{eqnarray*}
\limsup_{n\to\infty} f(n)=\infty~\quad\quad a.s.
\end{eqnarray*}
We emphasize that this proof involves the establishment of uniform bound of the extinction probability for the critical {\it Galton-Watson} process descending from an arbitrary level $r$ to zero.
This being the case, the natural subsequent question concerns the asymptotic behavior analyzed by R\'{e}v\'{e}sz \cite[Section~13.2]{Re13}.
R\'{e}v\'{e}sz \cite[Theorem~13.12]{Re13} established almost surely that
$$\limsup_{n\to\infty} f(n)\le a\log n,$$
where $a>\frac{1}{2}$, by analyzing how many times a random walk reaches its maximum.
It is worth noting that this result appears in both the 2005 and 2013 editions of R\'{e}v\'{e}sz's book.
In this work, we will give a complete answer to this question:
\begin{thm}\label{main result}
With probability $1$,
\begin{eqnarray*}
\limsup_{n\to\infty} \dfrac{f(n)}{\log\log n}=\frac{1}{\log 2}.
\end{eqnarray*}
\end{thm}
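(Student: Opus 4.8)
The plan is to prove matching upper and lower bounds for $\limsup_n f(n)/\log\log n$. For the upper bound, the key heuristic is that for $f(n)$ to be large, \emph{every} site in $\mathcal{R}(n)$, in particular the two extreme sites (the current running maximum and minimum), must have been visited many times. Following the strategy behind R\'{e}v\'{e}sz's $O(\log n)$ bound but sharpening it, I would decompose time into the excursions between consecutive records of the maximum. Each time the walk reaches a new maximum $m$, the number of subsequent returns to $m$ before the walk moves on is geometric with mean $2$ (by the gambler's ruin / strong Markov property at $m$), and these counts are independent across records. The running maximum at time $n$ is of order $\sqrt{n}$, so up to time $n$ there are about $\sqrt{n}$ record values, hence about $\log n$ records on a dyadic scale; but the relevant ``record-breaking times'' that survive to be the \emph{final} maximum form a much sparser set. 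More precisely, for $f(n)\ge r$ to hold one needs a site that is visited at least $r$ times and then abandoned forever on one side, which ties the event to a geometric$(1/2)$ variable being $\ge r$, an event of probability $2^{-r}$. Summing a Borel--Cantelli bound over the relevant (doubly-logarithmically many independent) scales gives $\limsup_n f(n)/\log\log n \le 1/\log 2$.

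For the lower bound I would construct, infinitely often, a time $n$ at which the range is a short interval every site of which has been visited at least $r_k \approx (1-\varepsilon)\log\log n_k/\log 2$ times. The natural mechanism is: wait until the walk, started afresh (by the strong Markov property at a stopping time $T$), confines itself to an interval $[a,b]$ of bounded length $L$ and ``fills it uniformly to height $r$'' before leaving — i.e., the local time profile on $[a,b]$ is everywhere $\ge r$ and the walk exits $[a,b]$ exactly when the range is still $[a,b]$. By the connection to critical Galton--Watson processes used by T\'{o}th (the Ray--Knight description of SRW local times), the probability of such a ``clean fill to level $r$ on an interval of length $L$'' is roughly $c^{L}\cdot p_r$ where $p_r$ decays only polynomially or slower in $r$ — crucially \emph{not} exponentially — thanks to T\'{o}th's uniform lower bound on the survival-to-level-$r$ probability of the critical branching process. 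I would make $L$ grow slowly and optimize: choosing the attempts at a geometric sequence of times $n_k$, the $k$-th attempt succeeds with probability $\gtrsim \exp(-C r_k)$, and because the attempts can be made independent (restart after each failure), the second Borel--Cantelli lemma yields infinitely many successes provided $\sum_k \exp(-Cr_k)=\infty$, i.e. provided $r_k \le (1-\varepsilon)\log\log n_k /\log 2$ with room to spare; letting $\varepsilon\downarrow 0$ gives the constant $1/\log 2$.

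In more detail, the lower-bound construction proceeds in steps: (i) fix $\varepsilon>0$ and set $n_k = 2^{2^k}$, $r_k=\lfloor(1-\varepsilon)k/\log 2\rfloor$, so $r_k\sim(1-\varepsilon)\log\log n_k/\log 2$; (ii) define stopping times $T_k$ spaced so that the walk has ``fresh randomness'' on the time interval $(T_k, T_k + L_k^2 r_k]$ with $L_k$ a slowly growing length, and $T_k + L_k^2 r_k \le n_{k+1}$; (iii) on this interval ask for the event $A_k$ that the walk, relative to its position $S_{T_k}$, realizes a local-time profile on an interval of length $L_k$ that is $\ge r_k$ everywhere and then exits, with the range never having escaped that interval — this is exactly a critical Galton--Watson excursion surviving to level $r_k$, plus cheap ``return'' moves, so $\p(A_k)\ge c_1 e^{-c_2 r_k}$ uniformly by T\'{o}th's branching estimate; (iv) the $A_k$ are independent by the strong Markov property, and $\sum_k \p(A_k)=\sum_k c_1 e^{-c_2 r_k}$ — wait, this needs $c_2<\log 2$, which is precisely where T\'{o}th's \emph{uniform} (non-exponentially-decaying) bound on the extinction/survival probability must be invoked to replace the naive $2^{-r_k}$ by a genuinely sub-exponential or tuned quantity; (v) Borel--Cantelli (second, via independence) gives $A_k$ infinitely often, and on $A_k$ we have $f(m)\ge r_k$ for some $m\le n_{k+1}$, whence $\limsup_n f(n)/\log\log n \ge (1-\varepsilon)/\log 2$.

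The main obstacle is step (iv)–(v) of the lower bound: getting the success probability of the $k$-th attempt to decay \emph{exactly} like $2^{-(1+o(1))r_k}$ rather than $e^{-c r_k}$ for some unknown $c$. A crude geometric bound gives $2^{-r_k}$ for a \emph{single} site, but forcing an entire interval to be filled to level $\ge r$ and then cleanly abandoned naively costs $(2^{-r})^{L}$, which would ruin the constant unless $L$ is taken essentially constant — but constant $L$ forbids the range from being large, and a small range makes it too easy to also have a site visited only once. The resolution is the Ray--Knight / branching-process picture: the local-time profile of SRW across an interval, read as a function of the spatial variable, is a critical Galton--Watson process, and the probability that it survives from level $r$ down to $0$ over a spatial interval of length $L$ is, by T\'{o}th's lemma, bounded below \emph{uniformly in $r$} by something like $c/L$ rather than $c^L$; this is the whole point of the ``uniform bound on extinction probability'' emphasized in the excerpt, and extracting the sharp constant $1/\log 2$ from it — matching the upper bound exactly — is the delicate quantitative heart of the argument. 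Symmetrically, the upper bound's subtlety is ensuring that the $\approx\log\log n$ relevant independent ``chances'' for a large-$f$ event are correctly counted: one must show that between scales $n$ and $n^2$ the walk's maximum site is renewed only boundedly often on average, so that Borel--Cantelli over $n_k=2^{2^k}$ with threshold $(1+\varepsilon)k/\log 2$ converges.
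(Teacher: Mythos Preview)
Your upper-bound sketch is broadly correct and close to the paper's argument, though your counting of the ``relevant chances'' is muddled (the right count is not records of the maximum but full \emph{traversals} of the range from one extreme to the other, of which there are $\asymp\log n$ up to time $n$). The paper makes this precise by defining stopping times $T_j,T_j'$ marking successive such crossings, proving $\log T_j/j\to 4$ a.s.\ via a law of large numbers for $\log(M_j-N_j)$, and then observing that $f(n)\ge r$ forces the current extreme to be hit $\ge r$ times within a single traversal, an event of conditional probability $\le 2^{1-r}$; a union bound over the first $2^r/r^2$ traversals together with Borel--Cantelli gives the bound.

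Your lower-bound plan, however, contains a genuine gap. You propose to wait for the walk to confine itself to a \emph{short} interval of length $L$ and fill it uniformly to level $r$. But at any time $n$ with $\log\log n\approx r\log 2$, the range $\mathcal{R}(n)$ has length of order $\sqrt{n}$ by the law of the iterated logarithm, which is doubly exponential in $r$; it cannot be made short. If instead you mean ``confined after a stopping time $T_k$'', then the range accumulated before $T_k$ is still enormous and will typically contain sites visited only once, which kills $f\ge r$. The paper's resolution is the \emph{opposite} of yours: it accepts that the range $[N_{j+1},M_j]$ is enormous and shows that, conditionally on the single extreme site $N_{j+1}$ being hit $\ge r$ times (cost $\asymp 2^{-r}$), the rest of the interval is filled to level $r$ with conditional probability $\ge Cr^{-2}$, essentially for free. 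This conditional estimate is split into a bulk region $[N_{j+1}+r^4,\,M_j-r^4]$, handled by two independent crossings and a union bound, and a boundary strip of width $r^4$ near $N_{j+1}$, handled via Ray--Knight as a critical branching process \emph{with immigration}, $v_{i+1}=2+\sum_{\ell=1}^{v_i}X_{i,\ell}$; the $+2$ (coming from the two conditioned crossings $R_n$ and $S_n^3$) is precisely what keeps the process above $r$ for $r^4$ spatial steps, and it is absent from your picture. Finally, the other boundary near $M_j$ is not covered by this event at all; the paper handles it with a separate second-stage construction, restarting at $T_{\sigma_r+1}$ and running the mirrored argument on the reflected walk. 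T\'oth's uniform extinction bound, which you invoke, concerns a different event (survival from level $r$, not staying above $r$) and supplies none of these three ingredients, so as written your step~(iv) does not yield a success probability of order $2^{-r}$ with the correct constant.
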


We now outline the key steps of the proof.
Our approach begins by defining two carefully constructed sequences of stopping times that partition the random walk trajectory into well-defined segments. 
Each segment represents a path from a point sufficiently far in the positive direction to one sufficiently far in the negative direction. Since the random walk can only contribute to one side of the range at each segment (positive or negative) the introduction of these stopping times allows us to advance the work of \cite{Re13} in the upper-bound analysis.

For the lower bound, the argument requires more delicate handling. We further divide the range into three segments at two stopping times. The middle segment, which contains the origin and constitutes the majority of the range, is relatively straightforward to estimate. The remaining two segments correspond to the boundary regions of the range interval. To handle these, we employ the characterization of edge local times via the Ray-Knight theorem \cite[Theorem 1.1]{Kn63}, along with some explicit computations, to complete the proof of this part and, consequently, the lower bound.

Consider replacing $S_n$ by $S_n^{(d)}$ with $d\ge1$, a simple random walk on $\mathbb{Z}^d$. The corresponding $f^{(d)}(n)$ raises the natural question of determining $\limsup\limits_{n\to\infty} f^{(d)}(n)$.
The result relies on the following observations. 
\begin{thmA*}[\cite{ET60,Fl76}]\label{two and more dimensions}
For $k,n\in\N$, let $Q_k^{(d)}(n)$ denote the number of sites visited exactly $k$ times up to step $n$ in a $d$-dimensional simple random walk $(S_n^{(d)})$ on $\mathbb{Z}^d$. Formally,
$$Q_k^{(d)}(n)=\#\{j:0\le j\le n,\,\xi(S_j^{(d)},n)=k\}.$$
Then for $k\in\N$, we have the following asymptotic results:
\begin{eqnarray*}
&&\lim_{n\to\infty}\dfrac{Q_k^{(2)}(n)\cdot(\log n)^2}{\pi^2n}=1~\quad\quad\quad ~~~ a.s.\\
&&\lim_{n\to\infty}\dfrac{Q_k^{(d)}(n)}{n}=\gamma_d^2(1-\gamma_d)^{k-1}~\quad\quad a.s.~~~~~\mbox{ if }~~d\ge3,
\end{eqnarray*}
where $\gamma_d:= \p\left(S_n^{(d)} \neq S_0^{(d)},\; \forall n\geq 1\right)$, which is the probability that a simple random walk never returns to the starting point. 
\end{thmA*}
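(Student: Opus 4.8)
It is convenient to work with the site count $N_k^{(d)}(n)=\#\{x\in\z^d:\xi(x,n)=k\}$, so that $Q_k^{(d)}(n)=k\,N_k^{(d)}(n)$ and the two normalizations are interchangeable. The plan is a first-moment computation, a covariance (second-moment) estimate, and an almost-sure upgrade along a subsequence. For the first moment, the starting point is the convolution identity obtained by decomposing on the first hitting time $\tau_x$ of $x$ and applying the strong Markov property and translation invariance:
\begin{equation*}
\e\big[N_k^{(d)}(n)\big]=\sum_{x\in\z^d}\p_0\big(\xi(x,n)=k\big)=\sum_{t=0}^{n} u(t)\,r_k^{(d)}(n-t),
\end{equation*}
where $u(t)=\p_0\big(S_j\neq S_t,\ 0\le j<t\big)=\p_0(\text{no return to the origin within }t\text{ steps})$ is the probability that step $t$ lands on a fresh site, and $r_k^{(d)}(m)=\p_0\big(\xi(0,m)=k\big)$. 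For $d\ge3$ the walk is transient, so $u(t)\to\gamma_d$ and $r_k^{(d)}(m)\to\gamma_d(1-\gamma_d)^{k-1}$ (the total number of returns is geometric with success probability $\gamma_d$); a Ces\`aro / bounded-convergence argument applied to the convolution then yields $n^{-1}\e[N_k^{(d)}(n)]\to\gamma_d^2(1-\gamma_d)^{k-1}$. For $d=2$ the walk is recurrent, and the quantitative inputs are the classical planar asymptotics $u(t)\sim\pi/\log t$ together with the Darling--Kac-type estimate $r_k^{(2)}(m)\sim\pi/\log m$ (an inter-return time has tail $\sim\pi/\log m$, and the sojourn of the local-time profile at level $k$ is one such inter-return time); inserting these into the convolution, the bulk $t\asymp n-t\asymp n$ contributes $\sim\pi^2 n/(\log n)^2$, while the two endpoint ranges are of lower order by the truncated-mean bound $\sum_{s\le M} r_k^{(2)}(s)\lesssim M/\log M$. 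This gives $\e[N_k^{(2)}(n)]\sim\pi^2 n/(\log n)^2$.

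\textbf{Variance and almost-sure convergence.} Next I would estimate $\mathrm{Var}\big(N_k^{(d)}(n)\big)=\sum_{x,y}\mathrm{Cov}\big(\mathbbm 1[\xi(x,n)=k],\,\mathbbm 1[\xi(y,n)=k]\big)$ by showing that these covariances decay once $|x-y|$ is large, since then the visits to $x$ and to $y$ occur in essentially disjoint time intervals and the coupling error is governed by Green's-function / hitting-probability bounds. The conclusion is $\mathrm{Var}\big(N_k^{(d)}(n)\big)=o\big(\e[N_k^{(d)}(n)]^2\big)$ with a quantitative rate, hence $N_k^{(d)}(n)/\e[N_k^{(d)}(n)]\to1$ in $L^2$. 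For $d\ge3$ the upgrade to an almost-sure limit is routine: along $n_j=\lfloor\theta^j\rfloor$ apply Chebyshev and Borel--Cantelli, and for $n\in[n_j,n_{j+1}]$ bound the oscillation $\big|N_k^{(d)}(n)-N_k^{(d)}(n_j)\big|$ by the number of sites newly visited or revisited during $(n_j,n_{j+1}]$, which is at most $|\mathcal R(n_{j+1})|-|\mathcal R(n_j)|$ plus the past/future range overlap over a window of length $(\theta-1)n_j$; in the transient regime both are $\le C_{k,d}(\theta-1)\,\e[N_k^{(d)}(n_j)]+o(n_j)$, so letting $\theta\downarrow1$ yields the stated a.s.\ limit $n^{-1}N_k^{(d)}(n)\to\gamma_d^2(1-\gamma_d)^{k-1}$.

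\textbf{The main obstacle: the planar case.} The real difficulty is the almost-sure upgrade when $d=2$. Here the range increment over a window of ratio $\theta$, namely $|\mathcal R(n_{j+1})|-|\mathcal R(n_j)|\asymp(\theta-1)\pi n_j/\log n_j$, exceeds $\e[N_k^{(2)}(n_j)]\asymp n_j/(\log n_j)^2$ by a factor $\log n_j$, so the crude oscillation bound used for $d\ge3$ is useless. Two routes seem available, each delicate. One can refine the oscillation estimate by observing that among the sites touched during a short window only a fraction $\asymp1/\log n_j$ end up at local-time level exactly $k$, which brings the oscillation down to $O\big((\theta-1)\,\e[N_k^{(2)}(n_j)]\big)$ at the price of an extra first/second-moment computation over the window; alternatively, one can take a much denser subsequence $n_{j+1}=n_j\big(1+\varepsilon/\log n_j\big)$, so that the range increment is genuinely negligible, but then $j\asymp(\log n_j)^2$ and Borel--Cantelli demands a stronger relative-variance bound, of order $(\log n)^{-2-\delta}$ --- shifting the whole burden onto a sharp variance estimate for $N_k^{(2)}(n)$. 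Either way, the slowly varying $1/\log n$ factors make every error term only marginally negligible, so all the planar estimates (the endpoint ranges in the convolution, the inter-return tails, the covariance decay) must be carried out sharply rather than softly; this is where the genuine work lies. The intermediate dimensions $d=3,4$ are a minor nuisance --- the past/future range overlap is no longer $O(1)$ --- but a power- or $\log$-saving in the overlap suffices and the $d\ge3$ argument goes through unchanged.
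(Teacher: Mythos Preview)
The paper does not prove Theorem~A at all: it is quoted as a known result from Erd\H{o}s--Taylor \cite{ET60} (the transient case $d\ge3$) and Flatto \cite{Fl76} (the planar case), and is used only as motivation to explain why the $d\ge2$ analogue of $f(n)$ is trivial. There is therefore no ``paper's own proof'' to compare your proposal against.

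That said, your outline is broadly the right strategy and is in the spirit of the original references: a first-moment convolution identity based on the strong Markov property at the first hitting time, a second-moment/covariance bound to obtain $L^2$-convergence, and a subsequence argument for the almost-sure upgrade. You have also correctly identified where the genuine difficulty lies, namely that in $d=2$ the range increment over any geometric window dominates $\e[N_k^{(2)}(n)]$ by a factor $\log n$, so the naive oscillation bound fails and one needs either a refined oscillation estimate or a sharper variance bound along a denser subsequence; this is precisely the content of Flatto's paper. One small point: your line $Q_k^{(d)}(n)=k\,N_k^{(d)}(n)$ reads the paper's formal display literally, but the paper's verbal definition (``number of sites visited exactly $k$ times'') and the cited results both intend $Q_k^{(d)}=N_k^{(d)}$; otherwise the $k$-independence of the planar limit would be inconsistent with your own (correct) first-moment computation $\e[N_k^{(2)}(n)]\sim\pi^2 n/(\log n)^2$.
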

Theorem A ensures that, with probability 1, the range of a simple random walk in dimensions two and above contains infinitely many sites visited exactly once.
Thus, for $d\ge2$,
\begin{eqnarray*}
\limsup_{n\to\infty} f^{(d)}(n)=\lim_{n\to\infty} f^{(d)}(n)=1~~~~~~a.s.
\end{eqnarray*}
We see that the limit properties of $Q_{k}^{(2)}(n)$ do not depend on $k$, as explained in \cite{Ha97}. Further analysis of $Q_{k}^{(d)}(n)$ can be found in \cite{Pi74,Ha92}.
We now focus on the one-dimensional SRW. Let $g_1(n)$ denote the number of sites visited exactly once up to time $n$, defined as: $g_1(n):=\#\{x:\xi(x,n)=1\}$. \cite{Ma88,Ne84} discovered the following several interesting results on $g_1(n)$.
\begin{thmB*}[\cite{Ma88,Ne84}]\label{once visited site}
There exists some constant $0<C<\infty$ such that
\begin{eqnarray*}
\limsup_{n\to\infty} \dfrac{g_1(n)}{(\log n)^2}=C~~~~~~a.s.
\end{eqnarray*}
And for $n\in\N$,
\begin{eqnarray*}
\e\, g_1(n)=2.
\end{eqnarray*}
\end{thmB*}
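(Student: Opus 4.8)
\emph{The expectation identity.} The plan is to prove $\e\, g_1(n) = 2$ by linearity of expectation together with an exact combinatorial identity for the probability that a fixed site is visited exactly once. Write $g_1(n) = \sum_{x\in\z}\mathbbm{1}\{\xi(x,n)=1\}$, so that $\e\, g_1(n) = \sum_{x\in\z}\p(\xi(x,n)=1)$. For $x\neq 0$, the event $\{\xi(x,n)=1\}$ decomposes according to the (unique) time $j\in[0,n]$ at which the walk hits $x$: the walk must reach $x$ for the first time at step $j$, and then never return to $x$ during the remaining $n-j$ steps. Using the strong Markov property and reflection/reversal symmetry of SRW paths, I would express $\p(\xi(x,n)=1)$ in terms of first-passage probabilities and no-return probabilities, and then sum over $x$. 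A cleaner route, which I expect to be the cleanest write-up, is to reorganize the sum by the \emph{endpoint}: a site is visited exactly once iff it is ``newly discovered'' by the walk and immediately abandoned. In fact $g_1(n)$ counts exactly those sites that lie at the extreme end of an excursion; for one-dimensional SRW the sites visited exactly once up to time $n$ are precisely $\{S_n\}$ together with at most the current running max and running min that were just reached — one shows combinatorially that $g_1(n)$ equals the number of $x$ such that $x\in\mathcal R(n)$, $x$ was first hit at some time $j$, and $S_{j+1},\dots,S_n$ avoid $x$; conditioning on $S_n$ and using the ballot-type identities yields a telescoping sum equal to $2$. Since the statement $\e\, g_1(n)=2$ for all $n$ is exact and small, I would verify it by a direct generating-function or path-counting computation and cross-check against small cases $n=1,2,3$.

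\emph{The $\limsup$ statement.} For the almost-sure growth rate $\limsup_n g_1(n)/(\log n)^2 = C$, the approach is a second-moment / Borel--Cantelli argument combined with the comparison to the range $\mathcal R(n)$, whose size is of order $\sqrt n$ but whose local-time profile (via the Ray--Knight description) governs how many sites have local time exactly one. The key structural input is that the inverse local time at $0$ can be described by a critical branching-type / squared-Bessel process, so that along the sequence $n = n_k$ of inverse local times the collection of edge local times $\{\xi(x)\}$ is a Markov chain (the Ray--Knight chain) and $g_1$ along this subsequence is a functional of that chain. First I would reduce to a subsequence: by monotonicity and regularity of the local-time process between consecutive ``regeneration'' epochs (returns of the walk to a fixed level), it suffices to control $g_1$ at inverse-local-time instants $\rho_m$ where the walk has accumulated $m$ visits to $0$; here the range is a random interval $[-L_m^-, L_m^+]$ with $L_m^\pm$ of order $m$, and the number of once-visited sites is the number of down-crossings to level $1$ of the Ray--Knight branching walk over an interval of length $\asymp m$.

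\emph{Upper and lower bounds on the rate.} For the \textbf{upper bound} $\limsup_n g_1(n)/(\log n)^2 \le C$, I would estimate $\p(g_1(\rho_m) \ge t)$ by a union bound over which sites are visited exactly once, using that each such site contributes an independent-ish factor (a critical Galton--Watson line dying in one step), giving roughly $\p(\xi(x)=1,\ \xi(x\pm1)\text{ compatible})$ summed over $x$; large deviations for the total number of such sites over a window of length $\asymp m \asymp \sqrt n$ yield a Gaussian-type tail $\exp(-c\,t/\log m)$ once one accounts for the typical profile, so that $g_1 \gtrsim (\log n)^2$ infinitely often is borderline and a Borel--Cantelli argument along a geometric subsequence $n_k = 2^k$ caps the $\limsup$. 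For the \textbf{lower bound} $\limsup_n g_1(n)/(\log n)^2 \ge C$, I would use the second Borel--Cantelli lemma with the independence furnished by regeneration: partition time into independent blocks (successive excursions away from a slowly growing sequence of levels), show that within the $k$-th block the probability that $g_1$ reaches $c(\log n_k)^2$ is summable-from-below (i.e.\ $\sum_k \p(\cdot) = \infty$), and conclude. The constant $C$ emerges as the optimal constant in the relevant large-deviation rate function for the Ray--Knight chain producing order-$(\log n)^2$ once-visited sites.

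\emph{Main obstacle.} The hard part will be the precise two-sided control of the tail $\p(g_1(n) \ge t)$ in the regime $t \asymp (\log n)^2$: one needs to know not just that many sites can be visited exactly once, but the exact exponential rate, which requires understanding the Ray--Knight branching process conditioned to have many lines of height exactly one spread over a macroscopic interval — a moderate-deviation event for a critical branching random walk. Matching the upper and lower exponential rates (hence pinning the single constant $C$) is the crux; everything else (the regeneration structure, Borel--Cantelli bookkeeping, and the passage from the subsequence $\rho_m$ back to general $n$) is comparatively routine. For the purposes of this paper we only need the \emph{existence} of $C$ and the exact identity $\e\, g_1(n)=2$, so I would present the $\limsup$ part at the level of establishing finiteness and positivity of the constant, citing \cite{Ma88,Ne84} for the sharp value when a numeric value is needed.
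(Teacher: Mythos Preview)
The paper does not prove Theorem~B at all: it is stated with attribution to \cite{Ma88,Ne84} (Major~1988 for the $\limsup$ result, Newman~1984 for the expectation identity) and is included purely as background context in the introduction, alongside Theorem~A. There is no proof in the paper to compare your proposal against.

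On the substance of your sketch: for $\e\,g_1(n)=2$, the cleanest argument (essentially Newman's) is the one you gesture at but do not quite state. Writing $g_1(n)=\sum_{j=0}^n\mathbbm{1}\{S_i\neq S_j\text{ for all }i\neq j\}$ and using the Markov property at time~$j$ together with path reversal on $[0,j]$ gives $\e\,g_1(n)=\sum_{j=0}^n q_j\,q_{n-j}$, where $q_m=\p(S_1,\ldots,S_m\neq 0)$; the classical identity $q_{2m}=\p(S_{2m}=0)$ then makes this convolution identically~$2$. Your description in terms of ``extreme end of an excursion'' and running max/min is not the right characterization of once-visited sites and would not lead to a clean proof. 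For the $\limsup$ statement, your outline (Ray--Knight representation, regeneration blocks, two-sided Borel--Cantelli) is indeed the architecture of Major's argument, but what you have written is a strategy rather than a proof, and you correctly identify that pinning down the constant~$C$ is the genuinely hard part. Since the paper only cites this result and makes no use of it in the proof of Theorem~\ref{main result}, no proof is required here.
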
 
If we consider the opposite of ``rarely visited'' sites, we arrive at the concept of favorite sites (also called most visited sites). Formally, we define the set of favorite sites at time 
$n$ as:
$$\mathcal{K}^{(d)}(n):=\{x\in\mathbb{Z}^d:\xi(x,n)=\sup_{y\in{\mathbb{Z}^{d}}}\xi(y,n)\}.$$
The study of $\mathcal{K}^{(d)}(n)$ was initiated by
Erd\H{o}s and R\'{e}v\'{e}sz \cite{ER84,ER87}, where they formulated several fundamental questions. 
Despite significant progress over the years (see \cite{ER87,To01,DS18,HLOZ24} for ``cardinality'' and \cite{BG85,LS04,Ba23,DPRZ01} for ``escape rate''), many of these original questions remain open to this day.  For detailed surveys, see \cite{ST00,Oka16}.
% For example:
% \begin{itemize}
%     \item \textbf{Cardinality:} For dimensions $d\ge1$, does $\{\#\mathcal{K}^{(d)}(n)=r\}$ occur infinitely often when $r\ge3$? Existing results  show that, with probability 1,
%      \begin{align*}
% 	\begin{cases}
% 		\limsup_{n\to\infty}\mathcal{K}^{(d)}(n)=3&\text{ for }d=1,2,\\
% 		\limsup_{n \to \infty} \frac{\#\mathcal{K}^{(d)}(n)}{\log\log n}=-\frac{1}{\log \gamma_d}&\text{ for }d\ge3.
% 	\end{cases}
%      \end{align*}
%     \item \textbf{Escape Rate:} In a series of work \cite{BG85,LS04,Ba23}, precise asymptotic escape rate of the elements of $\mathcal{K}^{(1)}(n)$ were derived. The transience of $\mathcal{K}^{(2)}(n)$ was established in \cite{DPRZ01} (transience of $\mathcal{K}^{(d)}(n)$ is trivial for $d\ge3$).
%     \item \textbf{Open Problems and Related Work:}  
%     % Below are specific directions with partial results:
%     % \begin{itemize}
%     % \item Joint behavior of favorite sites and maximum local time, and "big jumps" \cite{CRS00};
%     % \item Location of favorite sites  \cite{CS98};
%     % % \item Favorite edges \cite{TW97,HHMS22,Hao23};
%     % \item Local time of favorite sites \cite{Ke65,CF86,ET60,DPRZ01,Ro05}.
%     % \end{itemize}    
% \end{itemize}

We now briefly discuss the organization of this work. Section \ref{se:2} is dedicated to preliminaries. The upper and lower bounds of Theorem \ref{main result} will be proved in Sections \ref{se:3} and \ref{se:4} respectively.

\section{Preliminaries}\label{se:2}

We denote by $c$ and $C$ positive and finite constants whose values are universal and may change from line to line.
If $\{a_n\}$ and $\{b_n\}$ are non-negative sequences, then we write
$a_n\lesssim b_n$ if there exists $c>0$ such that $a_n \leq c\, b_n$ for all $n$, and $a_n \asymp b_n$ if $a_n\lesssim b_n$ and $b_n\lesssim a_n$. For a sequence $\{a'_n\}$, we write $a'_n=O(b_n)$ if $|a'_n| \lesssim b_n$.
For a sequence $\{c_n\}$, we write $c_n=o(b_n)$ if $\lim_{n\to\infty}\frac{b_n}{c_n}=\infty$.

We now turn to random walk. 
Recall that $(S_n)_{n\geq 0}$ is a SRW on $\mathbb{Z}$, starting at $S_0=a$ and let $\p^a$ stand for the respective probability measure. We write $\p=\p^0$ for short. 
For  $x\in\mathbb{Z}$, $n\in\mathbb{N}$, we use $\h$ to denote the first hitting time. More precisely,
\begin{eqnarray*}
\h_{x}:=\inf\{k>0: S_k=x\};~\h_{x,y}:=\inf\left\{k>0: S_k\in\{x,y\}\right\};~\h_x(n):=\inf\{k\geq n: S_k=x\}-n
\end{eqnarray*}
for the first hitting time (after time $n$).
We also define corresponding $\sigma$-algebras:
\begin{equation}\label{sigma}
\mathcal{F}_{T}:=\sigma\{S_{[0,T]}\},
\end{equation}
where $S_{[0,T]}$ denotes the sub-path $\{S_{0},S_{1},...,S_{T-1},S_{T}\}$.
Below we introduce the following forms of the Law of Iterated Logarithm: the Khinchine (See e.g. \cite[Section 4.4]{Re13}) and the Chung (See e.g. \cite[Section 5.3]{Re13}). Let $L_n=\max_{0\le k\le n}|S_k|$, then almost surely:
\begin{eqnarray}\label{classic LIL}
\limsup_{n\to\infty}\frac{L_n}{\sqrt{2n\log\log n}}=1,~~~\liminf_{n\to\infty}\frac{L_n}{\sqrt{\frac{n}{\log\log n}}}=\frac{\pi}{\sqrt{8}}.
\end{eqnarray}
Therefore, \eqref{classic LIL} implies that, with probability 1,
\begin{eqnarray}\label{range LIL}
\limsup_{n\to\infty}\frac{\mathcal{R}(n)}{\sqrt{2n\log\log n}}\le2,~~~\liminf_{n\to\infty}\frac{\mathcal{R}(n)}{\sqrt{\frac{n}{\log\log n}}}\ge\frac{\pi}{\sqrt{8}}.
 \end{eqnarray}

\section{Upper Bound}\label{se:3}
Denote $T_0=N_0=0$. For each $j\in\mathbb{N}$, we introduce the following notation (See Figure~\ref{fig:stopping_time} for illustration):
\begin{eqnarray*}
&&T_j':=\inf\{n>T_j:S_n\le N_j\},~M_j:=\sup_{0\le n\le T_j'}S_n;\\
&&T_{j+1}:=\inf\{n>T_j':S_n\ge M_j\},~N_{j+1}:=\inf_{0\le n\le T_{j+1}}S_n.
\end{eqnarray*}

\begin{figure}[htbp]
    \centering
    \includegraphics[scale=0.45]{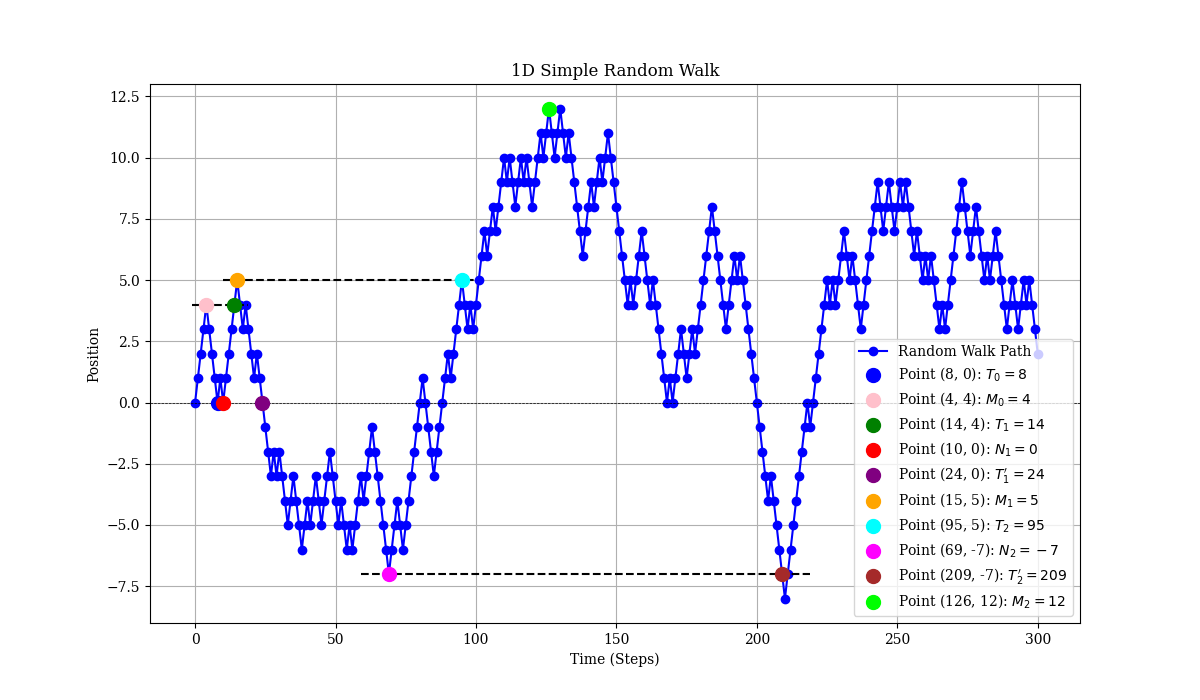} % 推荐无空格文件名
    \caption{In this visualization, $T_3>300,N_3\le -8$.}
    \label{fig:stopping_time} % 标签无空格
\end{figure}
It is easy to verify that for $j\in\mathbb{N}$:
\begin{eqnarray*}
M_j=\sup_{T_j\le n\le T_j'}S_n,~~\mbox{and}~~N_{j+1}=\inf_{T_j'\le n\le T_{j+1}}S_n.
\end{eqnarray*}
Since the random walk from time $T_{j}'$ to time $T_{j+1}$ describes a path starting at $N_j$ and ending at $M_{j}$, the following holds:
\begin{eqnarray}\label{Upper-N}
    \p\left(N_{j+1}\leq N_j-l\Big|\mathcal{F}_{T_j'}\right)=\p\left(\h_{-l}<\h_{M_j-N_j}\right)=\frac{M_j-N_j}{M_j-N_j+l}.
\end{eqnarray}
Similarly, 
\begin{eqnarray}\label{Upper-M}
    \p\left(M_{j+1}\geq M_j+l\big|\mathcal{F}_{T_{j+1}}\right)=\frac{M_j-N_{j+1}}{M_j-N_{j+1}+l}.
\end{eqnarray}
We introduce $K_j$ and $K_j'$ to represent the logarithm of the distance from  $S_{T_j'}$ to $S_{T_{j+1}}$ and the logarithm of the distance from $S_{T_{j+1}}$ to $S_{T_{j+1}'}$, respectively. More precisely,
\begin{eqnarray}\label{def-Kn}
   K_j=\log(M_j-N_j)~~\mbox{and}~~ K_j'=\log(M_j-N_{j+1}).
\end{eqnarray}
For $l\in\N$, by \eqref{Upper-N} and \eqref{Upper-M},
\begin{eqnarray*}
&&\p\left((K_j'-K_j)\geq \log\left(\frac{e^{K_j}+l}{e^{K_j}}\right)\Big|\mathcal{F}_{T_j'}\right)=\frac{e^{K_j}}{e^{K_j}+l},\\
&&\p\left((K_{j+1}-K_j')\geq \log\left(\frac{e^{K_j'}+l}{e^{K_j'}}\right)\Big|\mathcal{F}_{T_{j+1}}\right)=\frac{e^{K_j'}}{e^{K_j'}+l}.
\end{eqnarray*}
Then we have,
\begin{eqnarray}\label{SLLN1}
\e(K_j'-K_j|\mathcal{F}_{T_j'})=1-O(e^{-K_j}),~~ \e(K_{j+1}-K_j'|\mathcal{F}_{T_{j+1}})=1-O(e^{-K_j'}).
\end{eqnarray}

Combining \eqref{SLLN1} and the Strong law of Large Numbers:
\begin{eqnarray}\label{K_n LLN}
&&\lim_{j\to\infty}\dfrac{K_j}{j}=2~~~a.s.
\end{eqnarray}
Using \eqref{def-Kn}, we know that
\begin{equation*}
\lim_{j\to\infty}\frac{\log M_j}{2j}=\lim_{j\to\infty}\frac{\log N_j}{2j}=1 \quad \text{a.s.}
\end{equation*}
Hence, \eqref{K_n LLN} and  \eqref{range LIL} implies that
\begin{eqnarray}\label{SLLN-Tn}
&&\lim_{j\to\infty}\dfrac{\log T_j}{j}=4~~~a.s.
\end{eqnarray}
 
Since 
\begin{eqnarray}\label{N_k}
    \p(N_k<N_{k-1}|\mathcal{F}_{T_{k-1}'})=1-\frac{1}{M_{k-1}-N_{k-1}+1},
\end{eqnarray}
we see that $N_k=N_{k-1}$ holds for only finitely many $k$ with probability 1.
Let us define the stopping time $\tau_r=\inf\{n>0:f(n)=r\}$. We can assume that there exists a positive integer $k$ such that $T_{k-1}\leq \tau_r \le T_{k}'-1$. In particular, when $N_{k}<N_{k-1}$, $N_{k}$ is visited at least $r$ times within $[T_{k},T_{k}']$. 
To conclude this section, we restrict our attention to this case. 
(If $\tau_r\in[T_{k-1}',T_k]$, we consider $M_k$ is visited at least $r$ times.) 
Under this assumption, we consider $\tau_{r} \in \left[T_{k}, T_{k}'\right]$. More precisely, there exists $N_1 = N_1(\omega) \in \mathbb{N}$ such that for all $r > N_1$,
$$\tau_{r}\ge\sup\{T_k:\inf_{0\le n\le T_{k-1}}S_n=\inf_{0\le n\le T_{k}}S_n\}\quad\mbox{and}\quad\tau_{r}\in \left[T_{k},T_{k}'\right].$$

Note that if we already know the value of $N_k$, then the distribution of the random walk after the first time hitting $N_k$ is a SRW condition on hitting $M_k$ before hitting $N_{k}-1$, which implies that 
\begin{align*}
     &\p(N_k \text{ has been visited at least $r$ times before $T_k'-1$},N_{k}<N_{k-1})\\
     \leq&\,\,\p(N_k \text{ has been visited at least $r$ times in $[T_{k-1}',T_k]$})\\
     \leq&\,\,\p\left((X_{n})\text{ visited $0$ at least r times}\right)=\frac{1}{2^{r-1}},
\end{align*}
where $(X_n)$ is a SRW on $\mathbb{Z}$ starting at $0$, condition on never hitting $-1$.
% If there exists a positive integer $k$ such that $T_{k-1}'\leq \tau_r \leq T_{k}'-1$, then $N_{k}$ must be visited at least $r$ times during $[T_{k-1}',T_{k}'-1]$, which implies 
It follows that
\begin{align*}
    \p(T_{k-1}'\leq\tau_r \leq T_{k}'-1, N_k<N_{k-1})\le\frac{1}{2^{r-1}}.
\end{align*}
% \bl{Therefore, for $r$ satisfy $\tau_r\geq \sup\{N_k:N_k=N_{k-1}\}$,}
And we know $N_k<N_{k-1}$ for large $k$. Therefore, for large $r$,
\begin{eqnarray*}
    \p(\tau_r< T_{[\frac{2^r}{r^2}]})\le\sum_{k=1}^{[\frac{2^r}{r^2}]}\p(T_{k-1}'\leq\tau_r \leq T_{k}'-1)\leq \frac{2^r}{r^2}\cdot\frac{1}{2^{r-1}}=\frac{2}{r^2}.
\end{eqnarray*}
Combining this with the Borel-Cantelli lemma implies that there almost surely exists an integer $N$ such that $\tau_r\geq T_{[\frac{2^r}{r^2}]}$ for all $r\geq N$. Finally, with probability 1,
\begin{eqnarray}\label{upper-bound}
    \limsup_{n\to\infty} \dfrac{f(n)}{\log\log n}=\limsup_{r\to\infty} \dfrac{f(\tau_r)}{\log\log \tau_r}\leq \limsup_{r\to\infty} \dfrac{r}{\log\log T_{[\frac{2^r}{r^2}]}}\overset{(\ref{SLLN-Tn})}{=}\frac{1}{\log 2}.
\end{eqnarray}
This finishes the proof of the upper bound.

\section{Lower Bound}\label{se:4}
Since we are considering the limit in the $\limsup$ sense (which concerns large values of $r$), we can assume without loss of generality that there exists a sufficiently large constant $K_1$ such that $r > K_1$.
The main contribution to the lower bound comes from the SRW visiting the maximum or the minimum in the range, i.e., $M_j$ or $N_{j+1}$, at least $r$ times. We note that the probability of the SRW visiting $N_{j+1}$ between $T_j$ and $T_{j+1}$ is of order $2^{-r}$ (see \eqref{smallest distance estimate}). 
Moreover, conditional on this event, the probability that it also visits other sites in the range at least $r$ times is substantially large, as shown in Proposition \ref{main-pro} and Lemmas \ref{lem01} and \ref{lem02}.
% And we note that for any $\tau_r$ satisfying $T_{j+1}\leq \tau_r \leq T_{j+1}'-1$ (with some $j\in\N$), the random walk trajectory between $T_j$ and $T_{j+1}$ will, with high probability, make at least $r$ visits to a position near $N_j$.
This observation naturally gives rise to the following event:
% notice that when $T_{n+1}\leq \tau_r \leq T_{n+1}'-1$, \red{then with high probability the random walk from $T_n$ to $T_{n+1}$ must visit position near $N_n$ at least $r$ times.} 
% This lead to the following event,
\begin{eqnarray*}
&&A_{j,r}=\Big\{\forall x\in[N_{j+1}+1,M_j-r^4]:\#\{s:T_j\le s\le T_{j+1},S_s=x\}\ge r\Big\};\\
&&B_{j,r}=\Big\{\#\{s:T_j\le s\le T_{j+1},S_s=N_{j+1}\}\ge r\Big\}.
\end{eqnarray*}

Let $\mathcal{F}_{M_j}^s$ denote the $\sigma$-algebra generated by the value of $M_j$ before $T_j'$, and let $\mathcal{F}_{N_{j+1}}^s$ denote the $\sigma$-algebra generated by the value of $N_{j+1}$.
%Let \red{$\mathcal{F}_{M_j}^s$} denote the $\sigma$-algebra generated by the random walk up to the site $M_j$ before $T_j'$, and let \red{$\mathcal{F}_{N_{j+1}}^s$} denote the $\sigma$-algebra generated by the random walk up to the site $N_{j+1}$ before $T_{j+1}$.
Define the sigma field $\mathcal{F}'_j=\sigma(\mathcal{F}_{T_j}\cup \mathcal{F}_{M_j}^s \cup \mathcal{F}_{N_{j+1}}^s)$. Then for large $j\ge r^5$ and by \eqref{K_n LLN},
\begin{eqnarray}\label{smallest distance estimate}
\p(B_{j,r}|\mathcal{F}'_j)=\left(\frac12\cdot\p^{N_{j+1}+1}\left(\h_{N_{j+1}}<\h_{M_{j}+1}\right)\right)^{r-1}=\left(\frac{1}{2}\cdot \frac{M_{j}-N_{j+1}}{M_{j}-N_{j+1}+1}\right)^{r-1}\asymp 2^{-r}.
\end{eqnarray}
Let $\mathcal{F}_j^L=\sigma(\mathcal{F}'_j\cup \{B_{j,r}\})$.
The proof of the lower bound in Theorem \ref{main result} is postponed until the end of this section. We first turn to the next proposition, as it is a prerequisite for the proof.
\begin{pro}\label{main-pro}
On the event $B_{j,r}$ and $j\ge r^5$, we have 
$$\p(A_{j,r}|\mathcal{F}_j^L)\geq Cr^{-2}.$$
\end{pro}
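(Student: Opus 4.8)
The plan is to condition on the value of $N_{j+1}$ and the sub-path up to $T_j$ (so on $\mathcal{F}_j^L$, which also records that $N_{j+1}$ was hit at least $r$ times), and then analyze the walk between $T_j$ and $T_{j+1}$ as an excursion from $N_{j+1}$ back up to $M_j$. The key structural fact, already used in the excerpt, is that given $\mathcal{F}_j'$ the piece of the walk from its first visit to $N_{j+1}$ onward is a SRW conditioned to hit $M_j$ before $N_{j+1}-1$, i.e.\ an $h$-transform living on the interval $[N_{j+1},M_j]$; equivalently, reading the edge local times, this is exactly the setting of the Ray--Knight theorem \cite{Kn63}. So the first step is to translate the event $A_{j,r}$ --- that \emph{every} site in $[N_{j+1}+1, M_j-r^4]$ is visited at least $r$ times --- into a statement about a critical Galton--Watson-type process (the down-crossing local time chain), namely that this chain, started from a value of order $r$ near $N_{j+1}$ (which is guaranteed on $B_{j,r}$), stays $\ge r$ for the first $M_j - N_{j+1} - r^4 \asymp e^{2j}$ steps.

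The second step is the core estimate: the local-time chain $Z_i$ (local time at site $N_{j+1}+i$, read off from the Ray--Knight description) is a critical branching process with immigration / a near-martingale, and I want a lower bound of order $r^{-2}$ on the probability that it does not drop below $r$ before time $\asymp e^{2j}$. Since $j \ge r^5$, the time horizon $e^{2j}$ is astronomically larger than any polynomial in $r$, so one cannot simply say "the chain typically doesn't move much''; instead the right picture is that the chain, to survive, should first climb from level $r$ up to level $\asymp e^{2j}$ (this costs probability $\asymp r / e^{2j}$ by the optional-stopping / gambler's-ruin heuristic for the critical chain) and from a height $\asymp e^{2j}$ it comfortably survives the remaining $\asymp e^{2j}$ steps with probability bounded below. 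But $r/e^{2j}$ is far smaller than $r^{-2}$, so this cannot be the whole story --- the actual mechanism that yields the claimed $r^{-2}$ must be different, and I expect the point is that the relevant ``length'' that needs to be survived is not $e^{2j}$ but something much shorter. Re-examining: the sites beyond $M_j - r^4$ are \emph{excluded} from $A_{j,r}$, and near the top of the interval the conditioned walk is forced to go up, so the genuinely dangerous region where the chain could die is only the part near $N_{j+1}$; combined with the fact that on $B_{j,r}$ the chain starts at height $\ge r$, the survival probability over the dangerous stretch should be $\asymp 1/r^{2}$ by the standard estimate $\p(\text{critical GW from } r \text{ survives } \asymp \text{time comparable to } r^2) \asymp 1/r$ iterated, or more precisely by a second-moment/Paley--Zygmund argument on the number of ``good'' configurations. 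Concretely I would: (i) split $[N_{j+1}+1, M_j - r^4]$ into the boundary piece near $N_{j+1}$ of length $\asymp r^{?}$ and the bulk; (ii) on the bulk, use that the local-time chain started from a large value (of order $\sqrt{M_j-N_{j+1}}$, by Ray--Knight, since this is the edge local time of a walk crossing a macroscopic interval) stays $\ge r$ with probability $1 - o(1)$ --- this is "the middle segment is relatively straightforward'' from the introduction; (iii) on the boundary piece, compute directly, using \eqref{smallest distance estimate}-type gambler's-ruin identities and the explicit hitting probabilities, that the chain avoids dropping below $r$ with probability $\gtrsim r^{-2}$.

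The third step is to assemble these: by the Markov property of the local-time chain at the right-hand end of the boundary piece, $\p(A_{j,r}\mid \mathcal{F}_j^L) \ge \p(\text{survive boundary piece}) \cdot \inf \p(\text{survive bulk} \mid \text{entered bulk at height} \ge r)$, and the two factors are $\gtrsim r^{-2}$ and $\gtrsim$ const respectively, giving the claim. The main obstacle, and where I would spend the most care, is step (iii): getting the \emph{correct} polynomial power $r^{-2}$ (rather than $r^{-1}$ or worse) out of the boundary analysis. This requires identifying exactly which crossing the walk must perform near $N_{j+1}$ --- essentially the walk, having already visited $N_{j+1}$ $r$ times, must additionally visit each of $N_{j+1}+1, \dots$ at least $r$ times, and the binding constraint is a single critical-branching-process excursion that must stay above $r$ for a length comparable to $r^{2}$ before it reaches the safety of the macroscopic bulk local time; the probability of this is $\asymp r \cdot r^{-2} \cdot$ (correction) $\asymp r^{-2}$ after accounting for the starting height $r$. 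I would handle this via an explicit moment computation on the Ray--Knight chain restricted to the boundary window, exactly as the introduction promises (``along with some explicit computations''), and would lean on Lemmas \ref{lem01} and \ref{lem02} to supply the two one-sided estimates that this argument decomposes into.
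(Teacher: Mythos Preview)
Your overall plan---split $A_{j,r}$ into a bulk piece $[N_{j+1}+r^4, M_j-r^4]$ and a boundary piece $[N_{j+1}+1, N_{j+1}+r^4-1]$, handle them via Lemmas~\ref{lem01} and~\ref{lem02}, and invoke Ray--Knight for the latter---is exactly the paper's structure. But your mechanism for the bulk has a gap. You propose to track the local-time chain and argue it is ``started from a large value (of order $\sqrt{M_j-N_{j+1}}$)'' on entering the bulk; however, the chain indexed from $N_{j+1}$ starts at height $r$ on $B_{j,r}$, and after the boundary window of length $r^4$ it is only of order $r^4$, not $e^{j}$. More seriously, a single conditioned crossing visits a site $x$ at distance $d:=\min(M_j-x,x-N_{j+1})$ from the endpoints fewer than $r$ times with probability $\asymp r/d$, and summing this over the bulk diverges near its edges, so a one-chain argument plus union bound does not yield $1-o(1)$. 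The paper's route is different: it decomposes the walk on $[U_j, T_{j+1}]$ into three \emph{independent} pieces $(R_n),(S_n^2),(S_n^3)$---the time-reversed first descent $M_j\to N_{j+1}$, the $r-1$ excursions from $N_{j+1}$ back to itself, and the final ascent $N_{j+1}\to M_j$---and for each bulk site $x$ uses that the failure events $\{\xi_1(x)<r\}$ and $\{\xi_2(x)<r\}$ (coming from $(R_n)$ and $(S_n^3)$ separately) are independent, so their intersection has probability $O(r^2/d^2)$, which \emph{is} summable over $d\ge r^4$ to $\le 3r^{-2}$. This two-crossing independence is the idea missing from your sketch.

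For the boundary the paper again uses the three-piece decomposition: summing edge local times over $(R_n),(S_n^2),(S_n^3)$ gives a chain $(v_i)$ that is critical Galton--Watson with immigration $2$ and $v_1\ge r$, and the survival bound $\p(\min_{i\le r^4} v_i \ge r)\ge 1-Cr^{-0.3}$ comes from the truncated supermartingale $W_j=v_j^{-0.1}\mathbf{1}_{\{\tau>j\}}+(r-1)^{-0.1}\mathbf{1}_{\{\tau\le j\}}$ and Doob's maximal inequality, not a Paley--Zygmund or gambler's-ruin computation; together with an error term comparing the conditioned walks to their infinite-interval versions this gives $\p(A_{j,r}^2\mid\mathcal{F}_j^L)\ge C/r$, stronger than your $r^{-2}$. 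The two lemmas are then combined via $\p(A_{j,r}^1\cap A_{j,r}^2)\ge \p(A_{j,r}^2)-\p((A_{j,r}^1)^c)$, not via the Markov property of the chain at the bulk/boundary interface.
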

We split $A_{j,r}$ into two parts:
\begin{eqnarray*}
&&A_{j,r}^{1}=\Big\{\forall x\in[N_{j+1}+r^4,M_j-r^4]:\#\{s:T_j\le s\le T_{j+1},S_s=x\}\ge r\Big\},\\
&&A_{j,r}^{2}=\Big\{\forall x\in[N_{j+1}+1,N_{j+1}+r^4-1]:\#\{s:T_j\le s\le T_{j+1},S_s=x\}\ge r\Big\}.
\end{eqnarray*}
And Proposition \ref{main-pro} can be proved by verifying the following two Lemmas.
\begin{lem}\label{lem01}
On the event $B_{j,r}$ and $j\ge r^5$, we have
$$\p(A_{j,r}^1|\mathcal{F}_j^L)\geq 1-3r^{-2}.$$
\end{lem}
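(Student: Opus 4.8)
The plan is to pass from visit counts to edge--crossing counts and then apply the Ray--Knight description. Set $D=M_j-N_{j+1}$ and, for $0\le k\le D$, let $Z_k$ be the number of upcrossings of the edge $\{N_{j+1}+k,\,N_{j+1}+k+1\}$ performed by the walk during $[T_j,T_{j+1}]$. Each of these $Z_k$ upcrossings departs from the site $N_{j+1}+k$, so $\#\{s:T_j\le s\le T_{j+1},\,S_s=N_{j+1}+k\}\ge Z_k$; moreover on $B_{j,r}$ every visit to $N_{j+1}$ is followed by an upstep (the walk cannot go below its minimum), so $Z_0\ge r$, while $Z_D=0$ since the walk never exceeds $M_j$. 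Hence it suffices to show that, conditionally on $\mathcal F_j^L$ and on $B_{j,r}$, one has $Z_k\ge r$ for every $k\in[r^4,\,D-r^4]$ with probability at least $1-3r^{-2}$. Note that \eqref{K_n LLN} together with $j\ge r^5$ gives $D\ge M_j-N_j=e^{K_j}$, which is far larger than $r^4$; in particular the window $[r^4,D-r^4]$ is long.

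By the Ray--Knight theorem \cite[Theorem 1.1]{Kn63}, conditionally on $\mathcal F_j^L$ the profile $(Z_k)_{0\le k\le D}$ is, up to $O(1)$ corrections at the two ends of the segment (coming from the walk entering at $M_{j-1}\le M_j$ and leaving at $M_j$), a critical Galton--Watson process with mean--one geometric offspring law, started from $Z_0$ and conditioned on the excursion event $\{Z_k\ge 1\text{ for }0\le k\le D-1\}\cap\{Z_D=0\}$; on $B_{j,r}$ it is additionally conditioned on $Z_0\ge r$. So $(Z_k)$ is a critical Galton--Watson excursion of height exactly $D$ whose bottom value is at least $r$, and the claim reduces to: such an excursion satisfies $\min_{r^4\le k\le D-r^4}Z_k\ge r$ with probability at least $1-3r^{-2}$.

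I would localise first at the two ends of the window. Conditioning a critical Galton--Watson process started from $Z_0$ on dying at generation exactly $D$ amounts, at a generation $t$ with both $t$ and $D-t$ large, to size--biasing the law of $Z_t$, the extra factor being proportional to the probability $\asymp Z_t/D^2$ that the forest above level $t$ has height exactly $D-t$. Applying this at $t=r^4$ and using the Yaglom limit (given survival to generation $r^4$, $Z_{r^4}$ is of order $r^4$) gives $\p(Z_{r^4}<r\mid\mathcal F_j^L,B_{j,r})\lesssim r^{-6}$, and by the same argument from the top end $\p(Z_{D-r^4}<r\mid\mathcal F_j^L,B_{j,r})\lesssim r^{-6}$; both are at most $r^{-2}$ for large $r$. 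On the complementary event $Z_{r^4},Z_{D-r^4}\gtrsim r^4$, and, started from a value of order $r^4$, the excursion behaves away from its top like the $Q$--process (the critical Galton--Watson process conditioned on never going extinct), which is transient: the probability it ever visits a level $\le r$ before generation $D-r^4$ is controlled by the first--passage estimate $\asymp r/r^4=r^{-3}\le r^{-2}$ for the $Q$--process. Summing the three contributions yields $\p(\exists\,k\in[r^4,D-r^4]:Z_k<r\mid\mathcal F_j^L,B_{j,r})\le 3r^{-2}$, hence $\p(A_{j,r}^1\mid\mathcal F_j^L)\ge 1-3r^{-2}$ on $B_{j,r}$.

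The hard part is the interior estimate: showing that a near--critical branching process conditioned to survive for the enormous time $D$ --- hence already of size $\asymp r^4$ after $r^4$ generations --- stays above the comparatively tiny level $r$ throughout $[r^4,D-r^4]$, failing with probability only $O(r^{-2})$. This combines a Yaglom--type estimate to pin the profile at the window's endpoints with a transience / first--passage estimate of the form ``a $Q$--process started at level $m$ reaches level $\le\ell$ with probability $\asymp\ell/m$''. Some care is also needed to make the Ray--Knight identification precise in the presence of the conditioning on the values of $M_j,N_{j+1}$ and on $B_{j,r}$, and to handle the $O(1)$ endpoint discrepancies in the passage between visit counts and edge--crossing counts.
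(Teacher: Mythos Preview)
Your approach is genuinely different from the paper's and considerably heavier. The paper does \emph{not} use Ray--Knight for $A_{j,r}^1$ at all; that machinery is reserved for the boundary piece $A_{j,r}^2$ (Lemma~\ref{lem02}). For $A_{j,r}^1$ the paper gives a direct elementary computation. It first splits the path on $[U_j,T_{j+1}]$ at the first and $r$-th visits to $N_{j+1}$ into three independent pieces $(R_n),(S_n^2),(S_n^3)$, where $R_n$ is the time-reversal of the descent from $M_j$ to $N_{j+1}$ and $S_n^3$ is the final ascent from $N_{j+1}$ to $M_j$; both are SRW bridges from $N_{j+1}$ to $M_j$ (conditioned not to overshoot). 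For a fixed interior site $x$, an explicit gambler's-ruin computation gives
\[
\p(\xi_i(x)\ge r)=\Bigl(1-\tfrac{M_j-N_{j+1}}{2(M_j-x)(x-N_{j+1})}\Bigr)^{r-1}+O(\cdot),\qquad i=1,2,
\]
and independence of $\xi_1(x),\xi_2(x)$ yields $\p(\xi_1(x)+\xi_2(x)<r)\le r^2/\min\{M_j-x,\,x-N_{j+1}\}^2$. A union bound over $x\in[N_{j+1}+r^4,M_j-r^4]$ then sums to $\le 3r^{-2}$. No branching-process analysis, no Yaglom limits, no $Q$-process.

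Your Ray--Knight route could in principle be made to work, but as written it has a real gap beyond the caveats you already flag: the process $(Z_k)$ on $[T_j,T_{j+1}]$ is \emph{not} a single GW excursion ``up to $O(1)$ endpoint corrections''. The walk enters at $M_{j-1}$ (which may be far below $M_j$), rises to $M_j$, descends past $N_j$ to $N_{j+1}$, and climbs back to $M_j$; the initial segment before $U_j$ contributes a nontrivial, non-$O(1)$ amount to every $Z_k$ with $k$ in $[M_{j-1}-N_{j+1},\,D]$. The paper's three-piece decomposition is precisely what makes the Ray--Knight picture clean, and once you have that decomposition the elementary gambler's-ruin argument for $A_{j,r}^1$ is both shorter and fully rigorous. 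Your GW heuristics (size-biased Yaglom at the window endpoints, $Q$-process transience in the interior) are morally correct, but turning them into a proof under the joint conditioning on $M_j,N_{j+1},B_{j,r}$ would cost more than the paper's two-line union bound.
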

\begin{lem}\label{lem02}
On the event $B_{j,r}$ and $j\ge r^5$, we have 
$$\p(A_{j,r}^2|\mathcal{F}_j^L)\geq \frac{C}{r}.$$
\end{lem}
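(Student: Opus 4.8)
The plan is to reduce the statement to a question about the edge local time process of a simple random walk conditioned to stay in a bounded interval, and then to apply the Ray--Knight theorem. On the event $B_{j,r}$ we know the walk, run from $T_j$ to $T_{j+1}$, hits $N_{j+1}$ at least $r$ times; moreover, given $\mathcal{F}_j^L$, the excursion structure of the path between consecutive visits to $N_{j+1}$ (together with the behaviour between the last visit to $N_{j+1}$ and $T_{j+1}$, when the walk climbs up to $M_j$ without returning to $N_{j+1}$) is that of a simple random walk started at $N_{j+1}+1$ and conditioned to hit $M_j+1$ strictly before returning to $N_{j+1}$. The local times at the sites $N_{j+1}+1,\dots,N_{j+1}+r^4-1$ accumulated over the first $r-1$ of these excursions dominate what is needed for $A_{j,r}^2$, so it suffices to bound below the probability that each of these $r^4-1$ sites is visited at least $r$ times in $r-1$ i.i.d.\ (conditioned) excursions from $N_{j+1}+1$.

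The key step is the Ray--Knight description. Looking at the edge local times of the walk near the reflecting/absorbing endpoint $N_{j+1}$, the vector of local times $\big(\xi(N_{j+1}+1,\cdot),\xi(N_{j+1}+2,\cdot),\dots\big)$ at the successive sites, evaluated up to an appropriate stopping time, forms a critical Galton--Watson process (equivalently, a squared Bessel-type chain), as in \cite[Theorem 1.1]{Kn63}. Concretely, if we let $Z_0$ be the number of down-crossings of the edge $\{N_{j+1},N_{j+1}+1\}$ — which on $B_{j,r}$ is at least $r-1$ — then $Z_1,Z_2,\dots$ evolves as a Galton--Watson process with geometric (mean one) offspring distribution, and the local time at site $N_{j+1}+i$ is comparable to $Z_i$. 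We need $Z_i\ge r$ for all $1\le i\le r^4-1$, starting from $Z_0\ge r-1$.

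So the heart of the matter is the following estimate for the critical geometric Galton--Watson process $(Z_i)$ started from $Z_0=r-1$:
$$
\p\Big(Z_i\ge r \text{ for all } 1\le i\le r^4-1\Big)\ge \frac{C}{r}.
$$
This is where the main work lies. One expects the process, started from level $\asymp r$, to stay of order $r$ for a time of order $r^2$ before fluctuating to $0$; surviving a window of length $r^4$ above level $r$ is a large-deviation-type event. The natural route is a diffusion rescaling: $Z_{\lfloor r^2 t\rfloor}/r$ converges to a squared Bessel process of dimension $0$ (Feller diffusion), and then one asks for the probability that a Feller diffusion started near $1$ stays above $1/r$ on the time interval $[0,r^2]$ — but since the time horizon $r^4$ blows up after rescaling, one cannot quote a fixed continuum estimate and must instead argue directly with the discrete chain, e.g.\ via a supermartingale / optional-stopping argument on a suitable function of $Z_i$ (such as $\log Z_i$ or a power of $Z_i$), or by a careful iteration over $O(r^2)$-length blocks, paying a constant-order survival probability on the first block and then a $1-o(1/r)$ probability of not dropping below $r$ thereafter. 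I anticipate that controlling this ``stay above level $r$ for time $r^4$'' probability — getting the constant power $r^{-1}$ and not something smaller — is the principal obstacle, and it will require the explicit hitting-probability computations for the Galton--Watson / squared-Bessel chain that the introduction alludes to. The remaining bookkeeping (transferring from edge local times back to site visits, handling the conditioning on hitting $M_j+1$ first, which only costs a bounded factor since $M_j-N_{j+1}$ is large, and absorbing everything into the constant $C$) is routine.
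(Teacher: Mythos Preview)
Your reduction to edge local times and the Ray--Knight description is the right framework, and it matches the paper's approach in spirit. But there is a genuine gap in the step where you discard everything except the $r-1$ excursions from $N_{j+1}$ back to itself. For those excursions alone, the edge local time process $(Z_i)$ is indeed a critical geometric Galton--Watson chain started from $Z_0=r-1$, with \emph{no} immigration. For that pure chain the bound you need,
\[
\p\big(Z_i\ge r\ \text{for all }1\le i\le r^4-1\big)\ge \frac{C}{r},
\]
is simply false: already the survival probability $\p(Z_{r^4}>0\mid Z_0=r)$ is $1-(1-\tfrac{1}{r^4+1})^{r}\asymp r^{-3}$, so the event you want has probability $O(r^{-3})$, two powers of $r$ short of your target. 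Your own misgiving (``getting the constant power $r^{-1}$ and not something smaller'') is exactly on point, and no amount of block iteration or optional stopping on $\log Z$ or a power of $Z$ will repair this for the pure critical chain.

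What the paper does differently is to keep \emph{all three} pieces of the path near $N_{j+1}$: the $r-1$ return excursions, the final escape up to $M_j$, \emph{and} the time-reversed descent from $M_j$ down to the first hit of $N_{j+1}$. After removing the (negligible) conditioning involving $M_j$, each of the two escape pieces is a walk from $N_{j+1}$ conditioned never to return, and its Ray--Knight profile contributes one deterministic upcrossing at every level. The combined edge local time therefore satisfies
\[
v_{i+1}=\sum_{k=1}^{v_i}X_{i,k}+2,
\]
a critical Galton--Watson chain \emph{with immigration $2$}. That ``$+2$'' is decisive: it makes $v^{-\alpha}$ a supermartingale for small $\alpha$ (the drift term $-2\alpha v^{-\alpha-1}$ now beats the variance term $\alpha(\alpha+1)v^{-\alpha-1}$), and a Doob maximal inequality then gives $\p(\min_{i\le r^4}v_i<r)\le Cr^{-0.3}$, far stronger than $C/r$. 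To fix your argument you must retain at least both escape pieces; with immigration $0$ or $1$ the supermartingale computation fails and the survival bound is too small.
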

Our proofs of Lemmas \ref{lem01} and \ref{lem02} both rely on the assumption that event $B_{j,r}$ holds.
To prepare for the proof, we begin by introducing the following notation: 
\begin{eqnarray}
U_j=\sup\{n<T_j':S_n=M_j\};~V_j^0=U_j,~V_j^i=\inf\{n>V_j^{i-1}:S_n=N_{j+1}\}\mbox{~with~}1\le i\le r.
\end{eqnarray}
It is important to note that neither $U_j$ nor $V_j^i$ with $j,i\in\N$ is a stopping time, and $V_j^r$ is not necessarily the last hitting time of $N_{j+1}$ before $T_{j+1}$. 
Within the time interval $[U_j,T_{j+1}]$, the SRW can be decomposed into three components by partitioning at the times $V_j^1$ and $V_j^r$. Specifically, we define:
\begin{align*}
&S_{n}^1=S_{n+U_j},\quad\mbox{~~for~~} 0\leq n\leq V_j^1-U_j,\\
&S_{n}^2=S_{n+V_j^1},\quad\mbox{~~for~~} 0\leq n\leq V_j^r-V_j^1,\\
&S_{n}^3=S_{n+V_j^r},\quad\mbox{~~for~~} 0\leq n\leq T_{j+1}-V_j^r.   
\end{align*}
These three components are independent. For ease of subsequent proofs, we perform a time reversal of $(S_n^1)$ and denote the reversed process by $(R_n)$, i.e.,
\begin{eqnarray}
    R_n=S_{V_j^1-U_j-n}^1~~~\text{for}~U_j\le n\le V_j^1.
\end{eqnarray}
\begin{rem}
On the event $B_{j,r}$, we see that
\begin{itemize}
\item $(R_n)$ is a SRW starting at $N_{j+1}$, conditioned to reach $M_{j}$ before returning to $N_{j+1}$, and stopped when hitting $M_{j}$.
\item $(S_n^2)$ is constructed by combining $r-1$ independent SRWs, each starting at $N_{j+1}$, which first step to $N_{j+1}+1$ and then behave as a SRW conditioned to return to $N_{j+1}$ before reaching $M_j$ stopping upon re-entry at $N_{j+1}$.
\item $(S_n^3)$ is a SRW starting at $N_{j+1}$ and conditioned to reach $M_{j}$ before hitting $N_{j+1}-1$.
\end{itemize}  
\end{rem}

\begin{proof}[Proof of Lemma~\ref{lem01}]
For any $x\in [N_{j+1}+r^4,M_j-r^4]$, denote 
\begin{eqnarray*}
\xi_1(x)=\#\{k:0\le k\le V_j^1-U_j,R_k=x\},~\xi_2(x)=\#\{k:0\le k\le T_{j+1}-V_j^r,S_k^3=x\}.
\end{eqnarray*}
Then on the event $B_{j,r}$,
\begin{eqnarray}\label{proof of lower bound 01}
\p (\xi_1(x)\geq r)&=&\p^x\Big(\mbox{Before reaching } \{M_j,N_{j+1}\}, ~\mbox{SRW first hits }x \mbox{ at least } r-1 \mbox{ times}\Big)\nonumber\\
&=&\Big(1-\frac12\cdot \p^{x+1}\big(\mbox{Before returning } x ,~\mbox{SRW first hits }M_j \big)\nonumber\\
&&~~~-\frac12\cdot\p^{x-1}\big(\mbox{Before returning } x ,~\mbox{SRW first hits }N_{j+1} \big)\Big)^{r-1}\nonumber\\
&=&\left( 1-\frac12\cdot\frac{1}{M_j-x}-\frac12\cdot\frac{1}{x-N_{j+1}}\right)^{r-1}\nonumber\\
&=&\left( 1-\frac{M_j-N_{j+1}}{2(M_j-x)(x-N_{j+1})}\right)^{r-1}.
\end{eqnarray}
Similarly,
\begin{eqnarray}\label{proof of lower bound 02}
\p (\xi_2(x)\geq r)=\left( 1-\frac{M_j-N_{j+1}+1}{2(M_j-x)(x-N_{j+1}+1)}\right)^{r-1}.
\end{eqnarray}
For $x\in [N_{j+1}+r^4,M_j-r^4]$, it is clear that 
\begin{eqnarray}\label{proof of lower bound 03}
\#\{k:T_j\le k\le T_{j+1},S_k=x\}\geq \xi_1(x)+\xi_2(x).
\end{eqnarray}
Then for large $j\ge r^5$,
\begin{equation*}
\begin{split}
& \p\left(\#\{k:T_j\le k\le T_{j+1},S_k=x\}\geq r\right)\\
\overset{(\ref{proof of lower bound 03})}{\ge}\;\,& 1-\p\left(\xi_1(x)+\xi_2(x)<r\right)\\
\ge \;~~& 1-\left(1-\p (\xi_1(x)\geq r))\cdot(1-\p (\xi_2(x)\geq r)\right)\\
\overset{(\ref{proof of lower bound 01})}{\underset{(\ref{proof of lower bound 02})}{\ge}}\;\,&1-\,r^2\frac{M_j-N_{j+1}}{2(M_j-x)(x-N_{j+1})}\cdot\frac{M_j-N_{j+1}+1}{2(M_j-x)(x-N_{j+1}+1)}\\
\ge\;~~&1-\frac{r^2}{\min\{M_j-x,x-N_{j+1}\}(\min\{M_j-x,x-N_{j+1}+1\})},
\end{split}
\end{equation*}
where the last inequality is derived from the inequality $(a+b)\min\{a,b\}\le 2ab$, which holds for all $a,b\in\mathbb{N}$.
By applying the union bound, 
\begin{eqnarray*}
\p(A_{j,r}^1|\mathcal{F}_j^L)&\ge &1-\sum_{x=N_{j+1}+r^4}^{M_j-r^4}\left(\frac{r^2}{\min\{M_j-x,x-N_{j+1}\}(\min\{M_j-x,x-N_{j+1}+1\})}\right)\\
&\ge& 1-\sum_{k=r^4}^{\lceil\frac{M-j-N_{j+1}}{2}\rceil} \frac{r^2}{k^2}-\sum_{k=r^4}^{\lceil\frac{M-j-N_{j+1}}{2}\rceil} \frac{r^2}{k(k+1)}\ge 1-3r^{-2},
\end{eqnarray*}
where $\lceil a\rceil=\min\{ n \in \mathbb{Z}: n \geq a \}$ with $a\in\r$.
This concludes the proof.
\end{proof}

\begin{proof}[Proof of Lemma~\ref{lem02}]
We begin by defining three corresponding random walks associated with $\{(R_n),(S_n^2),(S_n^3)\}$. Firstly, define $(\tilde{R}_n)$ as a one-dimensional SRW starting at $N_{j+1}$ condition on never hit $(-\infty,N_{j+1}]$ again. For $i\ge1$, the transition probabilities are:
    \begin{align*}\label{upward shift transition probability}
        \p\left(\tilde{R}_{n+1}=N_{j+1}+i-1\Big|\tilde{R}_n=N_{j+1}+i\right)=\frac{i-1}{2i};\nonumber\\
        \p\left(\tilde{R}_{n+1}=N_{j+1}+i+1\Big|\tilde{R}_n=N_{j+1}+i\right)=\frac{i+1}{2i},
    \end{align*}
with initial step (forced upward transition):
$$ \p\left(\tilde{R}_{1}=N_{j+1}+1\right)=1.$$

Secondly, for $1\le i\le r-1$, let $(\tilde{U}_n^i)_{n\ge1}$ be independent random walks initialized at $\tilde{U}_1^i=N_{j+1}, \tilde{U}_2^i=N_{j+1}+1$ and evolving as SRW stopped at $N_{j+1}.$
Define $(\tilde{S}_n^2)$ is combine of $\{(\tilde{U}_n^i)_{n\ge0}\}$ with $1\leq i \leq r-1$. In other words, define the concatenated process $(\tilde{S}_n^2)$ recursively as follows:
\begin{align*}
\tilde{S}_0^2 = N_{j+1},\
\tilde{S}_n^2 = \tilde{U}_{n - \sum_{m=1}^{i-1} H_{N_{j+1}}^{m}}^{i} \quad \text{for} \quad \sum_{m=1}^{i-1} H_{N_{j+1}}^{m} < n \leq \sum_{m=1}^i H_{N_{j+1}}^{m}, \quad i = 1, \dots, r-1,
\end{align*}
where $H_{N_{j+1}}^{i}$ is the stopping time of the $i$-th process (i.e., $\tilde{U}_{H_{N_{j+1}}^{i}}^i=N_{j+1}$).
% and for $i\in[1,r-1]$,  $(\tilde{U}_n^{i})_{2\le n\le H_{N_{j+1}}^{i}}$ is a SRW on $\z$, starting at $\tilde{U}_2^{i}=N_{j+1}+1$.

Finally, define $(\tilde{S_n^3})$ is a SRW starting at $N_{j+1}$ condition on never hitting $N_{j+1}-1$ again, i.e., for $i\ge1$:
\begin{align*}
        &\p\left(\tilde{S_n^3}=N_{j+1}+i-2\Big|\tilde{S_n^3}=N_{j+1}+i-1\right)=\frac{i-1}{2i},\\
&\p\left(\tilde{S_n^3}=N_{j+1}+i\Big|\tilde{S_n^3}=N_{j+1}+i-1\right)=\frac{i+1}{2i}.
    \end{align*}
    
    Define $H_{M_j}^R=\inf\{n>0:\tilde{R}_n=M_j\}$ as the first time when $\tilde{R_n}$ hit $M_j$, and $H_{M_j}^{S^3}=\inf\{n>0:\tilde{S_n^3}=M_j\}$ as the first time when $\tilde{S_n^3}$ hit $M_j$.
\begin{rem}\label{same distribution}
For these three random walks, we see that
\begin{itemize}
    \item $(R_n)$ has the same distribution as $(\tilde{R}_n,\ 0\leq n\leq H_{M_j}^R)$.
    \item $(S_n^2)$ has the same distribution as $(\tilde{S}_n^2)$ conditioned on not hitting $M_j$.
    \item $(S_n^3)$ has the same distribution as $(\tilde{R}_n^3,\ 0\leq n\leq H_{M_j}^{S^3})$
\end{itemize}
\end{rem}
The analysis is restricted to edges in  $[N_{j+1}, N_{j+1} + r^4 - 1]$, where the event is defined as
$$D_{j,r}=\{(\tilde{R}_n)\mbox{~or~}(\tilde{S}_n^3) \mbox{~will~hit~} N_{j+1}+r^4 \mbox{~after~hitting~}M_j\}\cup\{(\tilde{S}^2_n)\mbox{~will~hit~}M_j\}.$$
We claim that
\begin{eqnarray}\label{Does not affect hitting}
    \p(D_{j,r}|\mathcal{F}_j^L)\leq \frac{2r^4+r}{M_j-N_{j+1}}.
\end{eqnarray}
Firstly,
\begin{eqnarray*}
\p\left((\tilde{S}^2_n)\mbox{~will~hit~}M_j\right)\leq \sum_{i=1}^{r-1}\p^{N_{j+1}+1}\left(S_{H_{M_j,N_{j+1}}^{i}}=M_{j}\right)\le\frac{r}{M_j-N_{j+1}}.
\end{eqnarray*}
%\begin{eqnarray*}
    %\p\left((\tilde{S}^2_n)\mbox{~will~hit~}M_j\Big|\tilde{S}^2_0=N_{j+1}\right)\leq \sum_{j=1}^{r-1}\p^{N_{j+1}+1}\left(S_{H_{M_j,N_{j+1}}^{j}}=M_{j}\right)\le\frac{r}{M_j-N_{j+1}}.
%\end{eqnarray*}
To calculate another part, we define a martingale $Y_n=\frac{1}{\tilde{R}_{n+H}-N_{j+1}}$ where $H=H_{M_j}^R$. Also define the stopping time $\h^Y=\inf\{k\ge0:Y_k\ge\frac{1}{r^4}\}$, then $Y_0=\frac{1}{M_j-N_{j+1}}$, and
%To calculate another part, we define a martingale $Y_n=\frac{1}{\tilde{R}_n-N_{j+1}}$ with $\tilde{R}_0=M_{j}$. Also define the stopping time $\h^Y=\inf\{k\ge0:Y_k\ge\frac{1}{r^4}\}$, then 
$$\p\left((\tilde{R}_n)\mbox{~will~hit~} N_{j+1}+r^4 \mbox{~after~time~} H \right)=\p\left(H^Y<\infty\Big|Y_0=\frac{1}{M_j-N_{j+1}}\right).$$
%$$\p\left((\tilde{R}_n)\mbox{~will~hit~} N_{j+1}+r^4 \Big|\tilde{R}_0=M_j\right)=\p\left(H^Y<\infty\Big|Y_0=\frac{1}{M_j-N_{j+1}}\right).$$
By the optional stopping theorem,
$$\frac{1}{M_j-N_{j+1}}=\p\left(H^Y<\infty\Big|Y_0=\frac{1}{M_j-N_{j+1}}\right)\cdot\e\left(Y_{\h_Y}\Bigg|\Big\{Y_0=\frac{1}{M_j-N_{j+1}}\Big\}\cap\{\h_Y<\infty\}\right).$$
This yields the bound
$$\p\left(H^Y<\infty\Big|Y_0=\frac{1}{M_j-N_{j+1}}\right)=\frac{r^4}{M_j-N_{j+1}},$$
and similar arguments apply to $(\tilde{S}_n^3)$, which completes the proof of \eqref{Does not affect hitting}.

Remark \ref{same distribution} implies that, conditioned on $D_{j,r}^c$, the edge local times over $[N_{j+1}+1, N_{j+1}+r^4-1]$ are identically distributed in $(R_n, S_n^2, S_n^3)$ and $(\tilde{R}_n, \tilde{S}_n^2, \tilde{S}_n^3)$.
For a random walk $(\tilde{S}_n)\in \{(\tilde{R}_n),(\tilde{S}_n^2),(\tilde{S}_n^3)\}$ and $k\in\N$, define
\begin{eqnarray*}
    &&\tau_{i,\tilde{S}_n}^0=0,~\tau_{i,\tilde{S}_n}^k=\inf\{n>\tau^{k-1}_{i,\tilde{S}_n}:\tilde{S}_{n-1}=N_{j+1}+i-1,\tilde{S}_n=N_{j+1}+i\};\\    
    &&s_{i,\tilde{S}_n}=\sup\{k:\tau_{i,\tilde{S}_n}^k<\infty\}=\#\{n:\tilde{S}_{n-1}=N_{j+1}+i-1,\tilde{S}_{n}=N_{j+1}+i\}.\\
    % &&\sigma_{i,\tilde{S}_n}^k=\inf\{n>\tau^{k}_{i,\tilde{S}_n},\tilde{S}_{n-1}=N_{j+1}+i,\tilde{S}_n=N_{j+1}+i-1\}.
\end{eqnarray*}
% In other words, $\tau_i^k$ is the time taken for the $k$-th SRW $(\tilde{S}_n)$ from point $N_{j+1}+i-1$ to point $N_{j+1}+i$; $s_i$ is the total number of upcrossings from $N_{j+1}+i-1$ to $N_{j+1}+i$ in the random walk; $\sigma_i^k$ is the first passage time of the random walk descending from point $N_{j+1}+i$ to $N_{j+1}+i-1$, measured immediately after its $k$-th upcrossing from point $N_{j+1}+i-1$ to $N_{j+1}+i$.
% \begin{rem}
% % We see that the random walk in $[\tau_i^k,\sigma_i^k]$ are independent with random walk outside $[\tau_i^k,\sigma_i^k]$. And when $\sigma_i^k < \infty$ the random walk in $[\tau_i^k,\sigma_i^k]$ is a SRW stop before downcrossing from $N_{j+1}+i$ to $N_{j+1}+i-1$. 
% When $\sigma_i^k < \infty$, the distribution of the number of upcrossings from $N_{j+1}+i$ to $N_{j+1}+i+1$ is a geometric distribution $\p(Geo=k)=2^{-k-1}$. When $\sigma_i^k= \infty$, 
% % the random walk in $[\tau_i^k,\sigma_i^k]$ is a SRW stop condition on never hitting $N_{j+1}+i-1$, 
% then the distribution of the number of upcrossings from $N_{n+1}+i$ to $N_{j+1}+i+1$ is a geometric distribution $Geo$ plus 1.   
% \end{rem}
Define $v_i=s_{i,\tilde{R}_n}+s_{i,\tilde{S}_n^2}+s_{i,\tilde{S}_n^3}$. 
By \cite[Theorem 1.1]{Kn63}, the local time process stopped at inverse local times allows us to rewrite these expressions in terms of the critical Galton-Watson process.
This connection enables a reformulation of the aforementioned expressions.
Thus,
\begin{eqnarray*}
    v_{i+1}=\sum\limits_{j=1}^{v_i}X_{i,j}+2,
\end{eqnarray*}
where $v_1\geq r$ (condition on $B_{j,r}$) and $X_{i,j} (i,j\in\N)$ are i.i.d. random variables with $\p(X_{i,j}=k)=2^{-k-1}$. 
Define $\tau=\inf\{k>0:v_{j}<r\}$. For large $r$, let
$$f(x) = 
\begin{cases} 
x^{-0.1} & \text{if } x \ge r-1 \\
(r-1)^{-0.1} & \text{if } x < r-1 
\end{cases}$$
i.e., $x^{-0.1}$ can be truncated to $(r-1)^{-0.1}$ for values of $x\le r-1$. Also define,
$$W_j={v_j}^{-0.1}\cdot\textbf{1}_{\{\tau>j\}}+(r-1)^{-0.1}\cdot\textbf{1}_{\{\tau\le j\}}.$$
On ${\tau > j}$, we have $W_{j+1} = f(v_{j+1})$. Notice that
$$f(v_{j+1})\le v_j^{-0.1}-0.1v_j^{-1.1}(v_{j+1}-v_j)+0.085v_j^{-2.1}(v_{j+1}-v_j)^2+(r-1)^{-0.1}\textbf{1}_{\{v_{j+1}\le0.9v_j\}}.$$
By Stirling's formula,
$$\p\left(v_{j+1}\le0.9v_j\right)\le\exp(-cv_j).$$
Therefore, $(W_j)$ is a supermartingale.
By Doob inequality,
\begin{eqnarray*}
r^{-0.1}\cdot\p\left(\max_{0\le k\le r^4}W_k\ge r^{-0.1}\right)\le\e(W_{r^4})\le Cr^{-0.4}.
\end{eqnarray*}
By $\p(\tau<r^4)=\p(\max_{0\le k\le r^4}W_k\ge r^{-0.1})$,
\begin{eqnarray*}
\p(\tau<r^4)\le Cr^{-0.3}.
\end{eqnarray*}
Then on the event $B_{j,r}$ and $j\ge r^5$, by Remark \ref{same distribution}, 
$$\p(A_{j,r}^2|\mathcal{F}_j^L)\geq  1-\left(\p(\tau\leq r^4)+\p(D_{j,r}|\mathcal{F}_j^L)\right)\geq \frac{C}{r}.$$
\end{proof}

We now establish the lower bound by applying Proposition \ref{main-pro}.
Define the stopping time $\sigma_r=\inf\{j>2^rr^{19}:A_{j,r}\}$, then by \eqref{smallest distance estimate},
\begin{align*}
\p(\sigma_r> 2^rr^{20})\le(1-c_1r^{-2}2^{-r})^{2^r(r^{20}-r^{19})}\le\exp(-c_2r^{18}).
\end{align*}
Thus,
$$\p(\sigma_r\leq 2^rr^{20})\geq 1-Cr^{-2}.$$

Define $\tilde{S}_n=-S_{n+T_{\sigma_r+1}}+S_{T_{\sigma_r+1}}$, then $(\tilde{S}_n)$ is a SRW independent with $\mathcal{F}_{T_{\sigma_r+1}}$. We apply the same stopping times $T_j,T_j'$ on $\tilde{S}_n$, note as $\tilde{T}_j,\tilde{T}_j'$, i.e., $\tilde{T}_0=\tilde{N}_0=0$ and for $j\ge1$:
\begin{eqnarray*}
&&\tilde{T}_j':=\inf\{n>\tilde{T}_j:\tilde{S}_n=\tilde{N}_j-1\},~\tilde{M}_j:=\sup_{0\le n\le \tilde{T}_j'}\tilde{S}_n;\\
&&\tilde{T}_{j+1}:=\inf\{t>\tilde{T}_j':\tilde{S}_n=\tilde{M}_j+1\},~\tilde{N}_{j+1}:=\inf_{0\le n\le \tilde{T}_{j+1}}\tilde{S}_n.
\end{eqnarray*}

Then for $(\tilde{S}_n)$, define 
$\tilde{A}_{j,r}:=\left\{\forall x\in[\tilde{N}_{j+1}+1,\tilde{M}_j-r^4]:\#\{s:\tilde{T}_j\le s\le \tilde{T}_{j+1},\tilde{S}_s=x\}\ge r\right\}$
and stopping time $\tilde{\sigma}_r=\inf\{ j>r^5:\tilde{A}_{j,r}\}$. Similarly, 
\begin{eqnarray*}
\p(\tilde{\sigma}_r\leq 2^rr^{15})\geq 1-Cr^{-2}
\end{eqnarray*}
and
\begin{eqnarray}\label{tilde LLN}
\lim_{j\to\infty}\dfrac{\log(\tilde{M}_j-\tilde{N}_{j+1})}{j}=2,~~~\lim_{j\to\infty}\dfrac{\log\tilde{T}_j}{j}=\lim_{j\to\infty}\dfrac{\log\tilde{T}_j'}{j}= 4~~~~~~a.s.
\end{eqnarray}
% By \eqref{K_n LLN}, we conclude that for a SRW on $\mathbb{Z}$, the expected hitting time between two points grows quadratically with their separation distance. More exactly, 
Thus, there exists a $K_{2}$, such that for $r>K_{2}$,
\begin{equation*}
T_{\sigma_r+1}'-T_{\sigma_r+1}>2^{2^rr^{18}},~~~\tilde{T}_{\tilde{\sigma}_r+1}<4^{2^rr^{16}}.
\end{equation*}
Thus, $T_{\sigma_r+1}+\tilde{T}_{\tilde{\sigma}_r+1}< T_{{\sigma_r}+1}'$ for all but finitely $r$.
Take $a_r=T_{\sigma_r+1}+\tilde{T}_{\tilde{\sigma}_r+1}$, we have $\liminf \limits_{r\rightarrow \infty} f(a_r)\ge r$.
By \eqref{SLLN-Tn} and \eqref{tilde LLN},
$\limsup \limits_{r\rightarrow \infty}\frac{\log\log(a_r)}{r}\leq \log2.$
Thus, 
$$\limsup_{n\to\infty}\frac{f(n)}{\log\log n}=\limsup_{r\to\infty}\frac{f(a_r)}{\log\log a_r}\ge\frac{1}{\log2}.$$
This completes the proof of the lower bound.

\section*{Acknowledgement} We thank Xinyi Li for many useful comments on an early
version of the manuscript. Hao is supported in part by China Postdoctoral Science Foundation (No.\ GZC20230089). All authors are partially supported by the National Natural Science Foundation of China (NSFC) through the NSFC Key Program (Grant No.\ 12231002).

\end{document}